\documentclass[12pt, oneside,reqno]{amsart}
\pdfoutput=1

\usepackage[a4paper,margin=1in]{geometry}
\usepackage{amsmath,amssymb,amsthm,mathtools}
\usepackage{graphicx}
\usepackage{subcaption}
\usepackage{float}
\usepackage{tikz}

\usepackage{appendix}

\usepackage{bbold}

\usepackage[normalem]{ulem}

\usepackage{chngcntr}
\usepackage{longtable}

\usepackage{comment}

\usepackage{hyperref}
\hypersetup{colorlinks,urlcolor=blue,citecolor=blue}

\def\hhref#1{\href{http://arxiv.org/abs/#1}{arXiv:#1}} 

\usepackage[nameinlink,noabbrev]{cleveref}
\numberwithin{figure}{subsection}

\newcommand{\C}{\mathbb{C}}

\newcommand{\Z}{\mathbb{Z}}

\newcommand{\Q}{\mathbb{Q}}

\newcommand\Qt{Q_1}

\theoremstyle{plain}
\newtheorem{thm}{Theorem}[section]     
\newtheorem{lemma}[thm]{Lemma}

\theoremstyle{definition}
\newtheorem{defn}[thm]{Definition}
\newtheorem{remark}[thm]{Remark}
\newtheorem{ex}[thm]{Example}

\DeclareMathOperator{\ord}{ord}

\author{Ovidiu Costin$^{*}$, Gerald V. Dunne$^{\dagger}$, Ali Saraeb$^{\ddagger}$}

\address{$^{*}$Mathematics Department, The Ohio State University, 231 W. 18th Avenue,  Columbus, Ohio 43210, USA}
\address{$^{\dagger}$Physics Department, University of Connecticut, 196 Auditorium Road,
Storrs, CT 06269, USA}
\address{$^{\ddagger}$Mathematics Department, The Ohio State University, 231 W. 18th Avenue,  Columbus, Ohio 43210, USA}

\email{costin@math.osu.edu}
\email{gerald.dunne@uconn.edu}
\email{saraeb.1@osu.edu}

\title{On uniqueness of mock theta functions}

\begin{document}

\begin{abstract}
We develop a resurgent approach to the problem of unique continuation of mock theta functions across their natural boundary. The starting point is the representation of the associated Mordell-Appell integrals as Laplace transforms of resurgent functions, which serve as the primary analytic objects. 

By rotating the Laplace contour by $\pi$, i.e.\ onto the Stokes line, one obtains, in all known cases, the mock-modular relations between the Mordell-Appell integrals and the corresponding unary series in $\hat q=e^{-\pi i \tau}$ and $\hat q_1=e^{-\pi i (-1/\tau)}$. 

We then prove that these relations admit a unique solution on the $q$-side, expressed in terms of $q=e^{\pi i \tau}$ and $q_1=e^{\pi i (-1/\tau)}$, with coefficients determined by the corresponding Mordell-Appell integrals. This yields a canonical continuation across the natural boundary, given by a resurgent extension of the classical principle of permanence of relations, and singles out a distinguished family of mock theta functions in each group.

We present a complete analysis for the order $3$ and $5$ cases (mf$_3$ and mf$_5$). The method extends naturally to higher orders;   a general theory will appear in a separate paper.

\end{abstract}

\maketitle

\section{Introduction and Motivation}

Mock modular symmetry has had a profound impact in both mathematics and physics. The early work of Ramanujan and Watson \cite{Watson,GM12} has found applications in combinatorics \cite{ono}, number theory \cite{Zwe08,Zag09,folsom}, representation theory \cite{Bring15,Cheng:2022rqr}, low dimensional topology \cite{GM21},  statistical physics \cite{baxter,andrewsHH},  Chern-Simons theory \cite{lawrence,Hikami04}, Umbral moonshine \cite{CDH12}, and quantum black holes \cite{DMZ12,Adams:2026ash}. The connection between mock modular symmetry and Chern-Simons theory has recently been explored \cite{GMP,CCKPS3d,Cheng:2018vpl} using the theory of resurgent asymptotics \cite{ecalle,costin-book,Sauzin06}. 

An important challenge is to continue uniquely across a natural boundary. 
In \cite{CDGG,ACDGO} OC, GD and collaborators proposed that the extreme rigidity of resurgent functions under the operations of analysis provides a new form of unique continuation, called {\it resurgent continuation}, which generalizes  the operation of continuation to resurgent transseries. This has been applied to the natural boundary in Chern-Simons theory, a topological quantum field theory, for which crossing the natural boundary corresponds to the operation of orientation reversal of the underlying 3 dimensional manifold, thereby connecting topological invariants of orientation reversed manifolds  \cite{CDGG,ACDGO}.
 
Previous uniqueness proofs by two of the authors relied on Hauptmoduls or on special features of low-order mock theta functions \cite{CDGG}, and do not readily extend to higher-order cases. In this work, we present a method, introduced by AS, that yields independent proofs of uniqueness for several examples arising in the Chern–Simons context, using only elementary tools from analysis and number theory. This approach is amenable to extension to a broader class of cases \cite{inprep}.
  
\subsection{Resurgent continuation through the natural boundary.}

The resurgent continuation used in this work rests on two guiding ideas. 
The first is the \emph{rigidity of resurgence}, which plays a role analogous 
to the rigidity of analyticity: once the resurgent data (that is, the 
singularity structure of the Borel transform) are fixed, the analytic 
continuation of the associated Laplace integrals is uniquely determined. 

The second is a resurgent analogue of the classical \emph{permanence of 
relations}: algebraic or functional relations persist under resurgent continuation.

In the resurgent approach, the analytic objects underlying the mock theta
functions are not the $q$-series themselves, but the {\em Mordell-Appell
integrals from which they arise}. Indeed, after suitable changes of variables,
these integrals can be written as Laplace transforms of resurgent kernels in
the Borel plane, and therefore define resurgent functions in the original
variable \cite{CDGG,ACDGO}. Their analytic behavior is thus governed by the singularity
structure of the corresponding Borel-plane kernels.

A crucial feature is that the Mordell-Appell integrals satisfy transformation laws under the action of the relevant subgroup of $SL_2(\mathbb{Z})$, while at the same time {\em admitting analytic continuation through the unit circle in the variable $q$}. Writing
\[
q=e^{-\alpha},
\]
this corresponds to analytic continuation of the integrals from $\alpha>0$ to $\alpha<0$.

In the resurgent framework the natural place to analyze such analytic continuation is near {\em Stokes lines}, where the transseries associated with the Laplace integral changes. In the present situation this occurs as $\alpha$ approaches $\mathbb{R}^-$. Denoting
\[
\hat q=e^{\alpha}=q^{-1}, 
\qquad 
\hat q_1=e^{\pi^2/\alpha},
\]
we observe that when $\alpha\in\mathbb{R}^-$ both $\hat q$ and $\hat q_1$ are small. The analytically continued integrals then admit a canonical decomposition
into a $\hat q$-series and a $\hat q_1$-series. 

This decomposition arises
directly from elementary  residue calculus applied to the Laplace representation and ultimately corresponds to the
unique splitting of the integral into its real and imaginary parts. The real part is the \'Ecalle-Borel summed part of the transseries, and it equals to a purely real $\hat{q}$ series, while the $\hat{q_1}$ series is purely imaginary, and it is the exponential part of the transseries.

This calculation is presented for the Mordell integrals corresponding to arbitrary order (potential) mfs in \cite{CDGG,ACDGO}, and gives the relations among a selected group of the mfs (see \S\ref{S:select}) and the Mordell-Appell integrals in the known, classical cases. See also Remark \ref{rem:continuation} for an example.

From the point of view of resurgence, this decomposition is fully determined
by the singularity structure of the resurgent kernel, which in turn
governs the Stokes phenomenon of the Laplace transform. In particular, the continuation across the natural boundary is uniquely determined by { a resurgent Mordell-Appell integral. In other words, on the side where $|\hat q|<1$ and $|\hat q_1|<1$ (equivalently, $\alpha\in\mathbb{R}^-$), the modular relations are realized by an ensemble consisting of three components: a $\hat q$–series, a $\hat q_1$–series, and a Mordell–Appell integral. Together, these objects are closed under the action of the modular transformations and provide an analytic realization of the modular equations on this side of the boundary.

On the other hand, since the discrete isometries of the upper half plane $\mathbb{H}$ are represented by
$SL_2(\mathbb{Z})$, it is natural to regard the modular equations as
foundational: they play, for functions on the hyperbolic plane, a role
analogous to that played by difference equations for functions on
$\mathbb{R}$ or $\mathbb{C}$.  See also Appendix \ref{S:SL2Z}.
}

\subsection{Resurgent permanence of relations.}\label{S:select}

A striking observation is that in any given mf order,
the modular equations satisfied by the analytically continued integrals coincide with
those satisfied by their original $q,q_1$ decompositions {\em  { by exactly one set of mfs}}. This unique set, which inherits resurgence from the underlying Mordell-Appell integral \footnote{again, nonsingular for  $|q|=1$}, represents the unique continuation of the solution of the modular equations through the natural boundary of the mfs. This unique, equation-preserving, continuation is seen as the resurgent analogue of the principle of permanence of relations.

{ Furthermore, still by permanence of relations,} whenever
independent considerations from physics or topology assign meaning to
both the $q$-expansions and the $1/q$-expansions, the resulting
continuation is expected to coincide with the resurgent continuation.

In this work, we present uniqueness proofs for the continuation across the natural boundary for two foundational examples: order 3 and order 5 mock theta functions, which we refer to as $\text{mf}_3$ and $\text{mf}_5$. In our approach, resurgence plays a central role.\footnote{As the natural boundary is approached, the asymptotic expansions of the relevant mock theta functions are factorially divergent and consequently lie outside the Denjoy-Carleman quasianalytic regime of unique continuation \cite{Rudin}.}

\section{Notation and Definitions}\label{sec:notation}

\noindent Throughout the paper, we use the following notation, conventions, and terminology. 

\subsection{Basic notation}

We write $\mathbb H=\{\tau\in\mathbb C:\Im\tau>0\}$ for the upper half-plane, 
$\mathbb D=\{z\in\mathbb C:|z|<1\}$ for the unit disk, and $\mathbb{1}$ for the $2\times 2$ identity matrix. Throughout, $\tau\in\mathbb H$, and we set
\[
\alpha=-\pi i\tau, \qquad q=e^{\pi i\tau}=e^{-\alpha}, \qquad Q=q^2=e^{2\pi i\tau}.
\]
Under the modular involution $S:\tau\mapsto -1/\tau$, we write
\[
q_1=e^{\pi i(-1/\tau)}=e^{-\pi^2/\alpha}, \qquad Q_1=q_1^2=e^{2\pi i(-1/\tau)}.
\]
We use the $q$-Pochhammer symbols
\[
(a;b)_n=\prod_{j=0}^{n-1}(1-ab^j), \qquad (a;b)_\infty=\prod_{j=0}^\infty(1-ab^j),
\]
and denote by $\eta(\tau)$ the Dedekind eta function
\[
\eta(\tau)=Q^{1/24}(Q;Q)_\infty.
\]
We denote by $S$ and $T$ the standard generators of $SL_2(\mathbb Z)$,
\[
S:\tau\mapsto -1/\tau, \qquad T:\tau\mapsto \tau+1,
\]
and write $\Gamma_\theta=\langle S,T^2\rangle$ for the theta subgroup.

Finally, we use $f\lesssim g$ to mean $|f|\le C|g|$ for some constant $C>0$, and 
$f\asymp g$ to mean $f\lesssim g$ and $g\lesssim f$.
For $r\in\mathbb Q$, we define
\[
q^r := e^{\pi i r\tau}, \qquad Q^r := e^{2\pi i r\tau},
\]
and similarly, under $S:\tau\mapsto -1/\tau$,
\[
q_1^r := e^{\pi i r(-1/\tau)}, \qquad Q_1^r := e^{2\pi i r(-1/\tau)}.
\]
Given a subgroup $\Gamma\le SL_2(\mathbb Z)$, we say that a function $F:\mathbb H\to\mathbb C$ is invariant under $\Gamma$ if
\[
F(\gamma\tau)=F(\tau), \qquad \gamma\in\Gamma.
\]
We now recall the standard notions of a \emph{cusp}, the \emph{width} of a cusp, and \emph{meromorphy at a cusp} \cite[p. 5-7]{Stein}.

\begin{defn} \label{def:cusp}
Let \(\Gamma\leq SL_2(\Z)\) be a subgroup of finite index.
\begin{enumerate}
    \item A \emph{cusp} of \(\Gamma\) is a \(\Gamma\)-equivalence class in
    \(\Q\cup\{\infty\}\).

    \item Let \(\mathfrak a\in \Q\cup\{\infty\}\) be a cusp of \(\Gamma\), and
    choose \(\sigma_{\mathfrak a}\in SL_2(\Z)\) such that
    \(\sigma_{\mathfrak a}(\infty)=\mathfrak a\). We define the \emph{width} of the cusp
    \(\mathfrak a\) to be the smallest positive integer \(w_{\mathfrak a}\) such that
    \[
    T^{w_{\mathfrak a}}
    \in
    \sigma_{\mathfrak a}^{-1}\Gamma_{\mathfrak a}\sigma_{\mathfrak a},
    \qquad
    \Gamma_{\mathfrak a}:=\{\gamma\in\Gamma:\gamma\mathfrak a=\mathfrak a\}.
    \]

    \item A meromorphic \(\Gamma\)-invariant function \(F\) on \(\mathbb H\) is
    said to be meromorphic at the cusp \(\mathfrak a\) if
    \(F(\sigma_{\mathfrak a}\tau)\) admits a Laurent expansion in
    \[
    q_{\mathfrak a}:=\exp\!\Bigl(\frac{2\pi i\tau}{w_{\mathfrak a}}\Bigr)
    \]
    as \(\Im\tau\to\infty\), of the form
    \begin{equation}\label{eq:cusp-laurent}
    F(\sigma_{\mathfrak a}\tau)=\sum_{n\ge n_\mathfrak a} c_n q_{\mathfrak a}^{\,n},
    \qquad c_{n_\mathfrak a}\neq 0.
    \end{equation}
    We then define the order of $F$ at the cusp $\mathfrak a$ to be
    \[
    \ord_{\mathfrak a}(F)=n_\mathfrak a.
    \]
\end{enumerate}
\end{defn}
For more details, see \cite[Sections 2, 4]{MilneMF} and \cite{Paule}. 
\begin{remark}
Indeed, the coefficients $c_n$ in \eqref{eq:cusp-laurent} depend on the choice of $\sigma_{\mathfrak a}$. However, the width $w_{\mathfrak a}$ and the order $\ord_{\mathfrak a}(F)=n_{\mathfrak a}$ are well-defined, i.e. they do not depend on the choice of $\sigma_{\mathfrak a}$ (see, e.g., \cite[p.~27-29]{Bhattacharya}). 
\end{remark}

\begin{remark}
    If $\ord_{\mathfrak a}(F) \ge0$ for some cusp $\mathfrak a$,  we say $F$ is holomorphic at $\mathfrak a$.
\end{remark}

\begin{ex}
    It is easy to show that the action of $SL_2(\Z)$ on $\Q \cup \{\infty\}$ has only one orbit, namely $\infty,$ and thus the only cusp of $SL_2(\Z)$ is $\infty$ \cite[p. 5]{Stein}.
\end{ex}

\begin{ex} \label{ex:theta-group}
The theta group\protect\footnotemark
\begin{equation}
    \Gamma_\theta:=\langle\, S,T^2\,\rangle,\qquad S:\tau\mapsto-1/\tau,\quad T^2:\tau\mapsto\tau+2.
\end{equation}
has two cusps: $1$ and $\infty$ (note that $-1\sim1$ under $T^2$), since one can show that every rational number is $\Gamma_\theta-$equivalent to either $1 \, (=1/1)$ or $\infty \, (=1/0)$ (see e.g., \cite[p.~ 20-29]{Schultz} or \cite[p.~ 76, 77]{GottscheZagier}). It is also standard and easy to show that the cusps $1$ and $\infty$ of $\Gamma_\theta$ have widths $w_1=1$ and $w_\infty=2,$ respectively (see, for example, \cite[p.~65]{maass} or apply Stein's algorithm \cite[p.~ 9]{Stein}). 
\footnotetext{The principal congruence subgroup of $SL_2(\mathbb{Z})$ of level $2$ is defined by
\begin{equation}
\Gamma(2):=
\left\{
\begin{pmatrix}
a & b\\
c & d
\end{pmatrix}
\in SL_2(\mathbb{Z})
:\ 
\begin{pmatrix}
a & b\\
c & d
\end{pmatrix}
\equiv
\begin{pmatrix}
1 & 0\\
0 & 1
\end{pmatrix}
\pmod{2}
\right\}.
\end{equation}
The theta group can be alternatively defined as the union of two cosets of $\Gamma(2)$, namely $\Gamma_\theta:= \Gamma(2) \,\cup\, S\, \Gamma(2).$ It is an important group in the theory of modular forms, satisfying $\Gamma(2)\subset \Gamma_\theta \subset \Gamma(1):=SL_2(\mathbb{Z}),
$ and $[\Gamma(1):\Gamma_\theta]=3.$ For more details, see \cite[p.~ 20-29]{Schultz} or \cite[p.~ 76]{GottscheZagier}.}
\end{ex}

\begin{defn}
Let \(\Gamma\leq SL_2(\Z)\) be a subgroup of finite index, and let \(k\in\Z\).

\begin{enumerate}
    \item A holomorphic function $F:\mathbb H\to\C$ is called a \emph{modular form of weight $k$ for $\Gamma$} if
    \[
    F(\gamma\tau)=(c\tau+d)^k F(\tau)
    \qquad
    \forall\ 
    \gamma=\begin{pmatrix}a&b\\c&d\end{pmatrix}\in\Gamma,\ \tau\in\mathbb H,
    \]
    and if $F$ is holomorphic at every cusp of $\Gamma$.
    \item A \emph{meromorphic modular form of weight $k$ for $\Gamma$} is defined as in $(1),$ except that $F$ is assumed to be meromorphic on $\mathbb H$ and meromorphic at every cusp of $\Gamma$.
    \item A meromorphic function $F:\mathbb H\to\C$ is called a \emph{modular function for $\Gamma$} if $F$ is invariant under $\Gamma$ and meromorphic at every cusp of $\Gamma$.
\end{enumerate}
\end{defn}

For the sake of clarity, we start with the proof of uniqueness for mf$_5$, since the conceptual structure of the transformations is simpler, namely all of them can be written in terms of $Q$ and $Q_1$. Furthermore, mf$_3$ introduces a new method for the more involved link $-q\mapsto-q_1$. 
\begin{remark}
Every holomorphic modular function is constant. For example, if $F$ is invariant under $SL_2(\Z)$ and holomorphic on $\mathbb H$ and at the cusp $\infty,$ it descends to a holomorphic function on the compact Riemann surface $SL_2(\mathbb Z)\backslash(\mathbb H \cup \Q \cup\{\infty\})$ and therefore is a constant.
\end{remark}

\section{Uniqueness proof for mf\texorpdfstring{$_5$}{5}}
\label{sec:mf$_5$}

In this section, we prove the uniqueness of the Mock 5 identities listed below in \eqref{eq:transf-chi} and \eqref{eq:mf5-identity}.

\subsection{The transformation laws} \label{sec: TL mf5}
Consider two of Ramanujan's original order 5 mock theta functions, denoted $\chi_0$ and $\chi_1$, \cite{Watson,GM12}, defined as
\begin{eqnarray}
    \chi_0(q):= \sum_{n=0}^\infty \frac{q^n}{(q^{n+1}; q)_n}
    \qquad, \qquad
    \chi_1(q):= \sum_{n=0}^\infty \frac{q^n}{(q^{n+1}; q)_{n+1}}
\end{eqnarray}
The relevant Mordell-Appell integrals are expressed in terms of (\cite{GM12}, page 118):
\begin{align}
L(r,\alpha)
&=\int_{0}^{\infty} e^{-\frac{3}{2}\alpha x^{2}}\, 
  \frac{\cosh \bigl((3r-2)\alpha x\bigr)+\cosh \bigl((3r-1)\alpha x\bigr)}
       {\cosh(\frac{3}{2}\alpha x)}\,dx.
\label{eq:L def}
\end{align}
The order-5 mock theta functions $\chi_0,\chi_1$ satisfy the following mock modular transformation laws for ${\rm Re}(\alpha)>0$  \cite[p.\ 118]{GM12} 
\begin{subequations}\label{eq:transf-chi}
\begin{align}
q^{-1/120}\bigl(\chi_0(q)-2\bigr)
= &-\sqrt{\frac{\pi(5-\sqrt5)}{5\alpha}}\,q_1^{-1/30}\bigl(\chi_0(q_1^{4})-2\bigr)
   -\sqrt{\frac{\pi(5+\sqrt5)}{5\alpha}}\,q_1^{71/30}\chi_1(q_1^{4}) \\ \nonumber
   &-\sqrt{\frac{135\alpha}{2\pi}}\,L\!\left(\frac15,5\alpha\right),
\end{align}
\begin{align}
q^{71/120}\chi_1(q)
= &-\sqrt{\frac{\pi(5+\sqrt5)}{5\alpha}}\,q_1^{-1/30}\bigl(\chi_0(q_1^{4})-2\bigr)
   +\sqrt{\frac{\pi(5-\sqrt5)}{5\alpha}}\,q_1^{71/30}\chi_1(q_1^{4}) \\ \nonumber
   &-\sqrt{\frac{135\alpha}{2\pi}}\,L\!\left(\frac25,5\alpha\right).
\end{align}
\end{subequations}

The transformation laws in \eqref{eq:transf-chi} are more naturally 
expressed in terms of $Q$ and $Q_1$, where we recall the notation defined in section \ref{sec:notation}:
\begin{eqnarray}
    Q=q^2=e^{-2\alpha} \qquad; \qquad \Qt=q_1^2 = e^{-2\pi^2/\alpha}
    \label{eq:q2}
\end{eqnarray}
This makes it possible to write the transformation laws \eqref{eq:transf-chi} in a more compact and symmetric matrix form. Evaluating \eqref{eq:transf-chi} at $2\alpha$, we have
\begin{eqnarray} 
\label{eq:mf5-identity}
    \mathcal L(\alpha)= 
    \begin{pmatrix}
    Q^{-1/120} \, \mathcal K_0(Q) \cr 
    Q^{-49/120} \, \mathcal K_1(Q)  
    \end{pmatrix}
    +\sqrt{\frac{\pi}{\alpha}} \, M\, 
    \begin{pmatrix}
    \Qt^{-1/120} \, \mathcal K_0(\Qt) \cr 
    \Qt^{-49/120} \, \mathcal K_1(\Qt)  
    \end{pmatrix}
    \quad, \quad \Re\alpha>0
\end{eqnarray}
Here we have defined
\begin{eqnarray}
    \mathcal K_0(Q)=2-\chi_0(Q) \qquad; \qquad \mathcal K_1(Q)=-Q\, \chi_1(Q)
    \label{eq:mf$_5$-solution}
\end{eqnarray}
and $\mathcal L (\alpha)$ in \eqref{eq:mf5-identity} is the 2-component vector of Mordell-Appell integrals from \eqref{eq:L def}:
\begin{eqnarray}
    \mathcal L (\alpha)=\sqrt{\frac{135\,\alpha}{\pi}}\, \begin{pmatrix} L\left(1/5, 10\alpha\right)
    \cr  {L\left(2/5, 10\alpha\right)}
    \end{pmatrix}
    \label{eq:lint}
\end{eqnarray}
The matrix $M$ in \eqref{eq:mf5-identity} is the $2\times 2$ mixing matrix
\begin{eqnarray}
    M=\frac{2}{\sqrt{5}} \begin{pmatrix}
    \sin\left(\frac{\pi}{5}\right) & \sin\left(\frac{2\pi}{5}\right)
    \cr \cr
    \sin\left(\frac{2\pi}{5}\right) & -\sin\left(\frac{\pi}{5}\right)
    \end{pmatrix}
    \label{eq:m}
\end{eqnarray}

\begin{remark}
The map $\alpha\mapsto \alpha_1:=\pi^2/\alpha$ (equivalently $\tau\mapsto -1/\tau$) is an involution,
and we have $(Q_1)_1=Q$.
\end{remark}
\begin{remark}
    Note that $M^2=\mathbb{1}$ and $\det(M)=-1.$
\end{remark}
\begin{remark}
$M$ also characterizes the modular
transformation $\alpha\to \frac{\pi^2}{\alpha}$ of the vector of Mordell-Appell integrals in \eqref{eq:lint}
\begin{eqnarray}
\mathcal L (\alpha)= \sqrt{\frac{\pi}{\alpha}}\, M\, \mathcal L\left(\frac{\pi^2}{\alpha}\right)
\label{eq:l-vector}
\end{eqnarray}
which is a consistency condition for the identity \eqref{eq:mf5-identity}, since the modular transformation interchanges $Q$ and $\Qt$, and we also have  that $M^2=\mathbb{1}$.
\end{remark}

\begin{remark}
    It is important to note that in \eqref{eq:mf5-identity} the functions $\mathcal K_j(Q)$, for $j=0, 1$, are $Q$-series expansions in non-negative integer powers of $Q$, defined in \eqref{eq:q2}. Furthermore, we observe that for any pair of holomorphic $q$-series $(\chi_0,\chi_1)$, the corresponding pair $(\mathcal K_0, \mathcal K_1)$ satisfies
    \begin{equation} \label{eq: estimates}
        \mathcal K_0(Q)= O(1), \qquad \mathcal K_1(Q)=O(Q), \qquad Q \to 0.
    \end{equation}
    This follows from 
    the fact that $\chi_0$ and $\chi_1$ are holomorphic at $Q=0$. 
\end{remark}

\begin{remark}
\label{rem:continuation}
    The $Q$-series structure of $\mathcal K_j(Q)$ in \eqref{eq:mf5-identity} is completely determined by the behavior of the Mordell-Appell integrals in $\mathcal L(\alpha)$ on the Stokes line $\alpha <0$. 
  The contour of integration defining $\mathcal{L}$ can be rotated to the Stokes line, $\alpha\mapsto e^{\pm i (\pi-\varepsilon)}\alpha$, with $\varepsilon>0$ arbitrarily small, where the integrals have a unique decomposition into real and imaginary parts
    \cite{CDGG,ACDGO}:
\begin{eqnarray} 
    \mathcal L(\alpha)= 
    \begin{pmatrix}
    Q^{1/120} \, X_0(1/Q) \cr 
    Q^{49/120} \, X_1(1/Q)  
    \end{pmatrix}
    +i\, \sqrt{\frac{\pi}{|\alpha|}} \, M\, 
    \begin{pmatrix}
    \Qt^{1/120} \, X_0(1/\Qt) \cr 
    \Qt^{49/120} \, X_1(1/\Qt) 
    \end{pmatrix}
    \qquad, \qquad (\alpha<0)
    \label{eq:mf5-identity-stokes}
\end{eqnarray}
Here $X_0(1/Q)$ and $X_1(1/Q)$ are the {\it unary} $Q$-series valid for $|Q|>1$:
\begin{eqnarray}
    X_0\left(\frac{1}{Q}\right)&=& Q^{1/120}\sum_{k=0}^\infty (-1)^k \left(Q^{-(14-(2k+1)15)^2/120}+Q^{-(14+(2k+1)15)^2/120} \right.
    \nonumber\\
    && \hskip 2cm  \left. + Q^{-(4-(2k+1)15)^2/120}+Q^{-(4+(2k+1)15)^2/120}\right)
    \nonumber \\
    X_1\left(\frac{1}{Q}\right) &=& Q^{49/120}\sum_{k=0}^\infty (-1)^k \left(Q^{-(8-(2k+1)15)^2/120}+Q^{-(8+(2k+1)15)^2/120} \right.
    \nonumber\\
    && \hskip 2cm  \left. + Q^{-(2-(2k+1)15)^2/120}+Q^{-(2+(2k+1)15)^2/120}\right)
    \label{eq:unary}
\end{eqnarray}
These large $Q$ expansions of $X_0$ and $X_1$ are expressed in terms of (unary) false theta functions \cite{CDGG,ACDGO}.
These results follow from straightforward residue computations and the principal value result for $\Re t>0$:
\begin{eqnarray}
\sqrt{\frac{4p\, t}{\pi}}\, PV\int_0^\infty dx\, e^{-p x^2/t} \frac{\cos( a x)}{\cos(p x)}
=\sum_{k=0}^\infty (-1)^k \left(e^{-(a-(2k+1)p)^2/(4 p t)}+e^{-(a+(2k+1)p)^2/(4 p t)}\right)
\label{eq:pv}
\end{eqnarray}
     Changing variables in $\mathcal{L}$ so that it becomes a vector of Laplace transforms, \eqref{eq:mf5-identity-stokes} coincides with the (necessarily unique)  transseries decomposition as $\alpha\to 0$ ($1/Q\to 1$) of  $\mathcal{L}$ on the Stokes line, where the vector in $1/Q$, written canonically as a PV integral in \eqref{eq:pv}, is the \'Ecalle-Borel sum of the asymptotic series as $\alpha\to 0$, and the $1/Q_1$ vector is the purely exponential part of the transseries. This structure is therefore required to be preserved under resurgent continuation across the natural boundary of the vector functions in $1/Q$ and $1/Q_1$ \footnote{We note again that $\mathcal{L}$ is not singular on that boundary.} back to the $\Re \alpha >0$ region \cite{CDGG,ACDGO}. This means that the full structure of \eqref{eq:mf5-identity}, such as the rational exponents, the $\sqrt{\pi/\alpha}$ factor, and the mixing matrix $M$, are all fixed by the Mordell-Appell integrals $\mathcal L(\alpha)$. This matches exactly the known structure of the identity \eqref{eq:mf5-identity} and the solution in \eqref{eq:mf$_5$-solution} on the other side of the natural boundary where $\Re \alpha >0$. Here we prove using elementary methods that \eqref{eq:mf$_5$-solution} is indeed the {\it unique} holomorphic $Q$-$Q_1$ pair of vectors solving \eqref{eq:mf5-identity}.
    
\end{remark}

\begin{remark}
On the non-unary side, where $\Re \alpha>0$,
    other solutions to \eqref{eq:mf5-identity} are known. However, these other solutions do not match the structure coming from the Mordell-Appell integrals on the Stokes line, and furthermore they are not holomorphic in $Q$. See Appendix C.
\end{remark}

\begin{remark} 
It is pointed out by Zagier in \cite{Zag09} that the pair $\chi_0$, $\chi_1$ is the only pair that satisfies the full set of $SL_2(\mathbb Z)$ transformations. This is similar in spirit to the conditions imposed in resurgent continuation.
\end{remark}

\subsection{The equations and normalization} \label{sec:eqnorm}
\noindent Suppose $(\chi_0,\chi_1)$ and $(\widetilde{\chi}_0,\widetilde{\chi}_1)$ are two pairs of
holomorphic $q$-series satisfying \eqref{eq:transf-chi}, and let $(\mathcal K_0,\mathcal K_1)$ and $(\widetilde{\mathcal K}_0,\widetilde{\mathcal K}_1)$ be the associated pairs defined by \eqref{eq:mf$_5$-solution}.
Define
\begin{eqnarray}
    \Delta \mathcal K_0(Q):= \mathcal K_0(Q)-\widetilde{\mathcal K}_0(Q)
    \qquad, \qquad 
    \Delta \mathcal K_1(Q):= \mathcal K_1(Q)-\widetilde{\mathcal K}_1(Q)
\end{eqnarray}
Then subtracting the two instances of \eqref{eq:mf5-identity} gives the {\it homogeneous} system
\begin{eqnarray}
    \begin{pmatrix}
    Q^{-1/120} \, \Delta\mathcal K_0(Q) \cr 
    Q^{-49/120} \, \Delta\mathcal K_1(Q)  
    \end{pmatrix}
    +\sqrt{\frac{\pi}{\alpha}} \, M\, 
    \begin{pmatrix}
    \Qt^{-1/120} \, \Delta\mathcal K_0(\Qt) \cr 
    \Qt^{-49/120} \, \Delta\mathcal K_1(\Qt) 
    \end{pmatrix} 
    =0
    \label{eq:homog}
\end{eqnarray}
Our strategy for proving uniqueness of \eqref{eq:mf5-identity} is to prove that the homogeneous system in \eqref{eq:homog} has no nontrivial holomorphic solution.

It is convenient to further simplify the homogeneous system \eqref{eq:homog}, effectively removing the $\sqrt{\pi/\alpha}$ factor, by the following normalization.
Recall the Dedekind eta function from section \ref{sec:notation}:
\begin{equation}
    \eta(\tau)=Q^{1/24}\prod_{n\ge1}(1-Q^n)=Q^{1/24}(Q;Q)_\infty
\end{equation}
and its transformation laws \cite{Apostol}
\begin{equation}\label{eq:eta tlaws}
\eta(\tau+1)=e^{\pi i/12}\eta(\tau), \qquad \eta \left(-\frac{1}{\tau}\right)=\sqrt{-i\tau}\,\eta(\tau)
=\sqrt{\frac{\alpha}{\pi}}\,\eta(\tau).
\end{equation}
Dividing \eqref{eq:homog} by $\eta(\tau)$, applying \eqref{eq:eta tlaws}, and using the normalization
\begin{equation}\label{eq:H23 def}
\mathcal H_0(Q):=\frac{\Delta \mathcal K_0(Q)}{(Q;Q)_\infty},\qquad
\mathcal H_1(Q):=\frac{\Delta \mathcal K_1(Q)}{(Q;Q)_\infty}.
\end{equation}
we obtain the equivalent homogeneous system
\begin{eqnarray}
    \begin{pmatrix}
    Q^{-1/20} \, \mathcal H_0(Q) \cr 
    Q^{-9/20} \, \mathcal H_1(Q)  
    \end{pmatrix}
    =-  M\, 
    \begin{pmatrix}
    \Qt^{-1/20} \, \mathcal H_0(\Qt) \cr 
    \Qt^{-9/20} \, \mathcal H_1(\Qt)  
    \label{eq:mf5-identity2}
    \end{pmatrix} 
    \label{eq:homog2}
\end{eqnarray}
where $M$ is the same mixing matrix as in \eqref{eq:m}. The advantage of \eqref{eq:homog2} is that it is symmetric under $S: Q\to\Qt$ (recall $M^2=\mathbb{1}$).
\begin{remark}
    Since $Q \to (Q;Q)_\infty$ is holomorphic and  zero-free on 
    $\mathbb D,$ both $\mathcal H_0$ and $\mathcal H_1$
    are clearly holomorphic $Q$-series on $\mathbb D$.
\end{remark}

\begin{remark}
    Using \eqref{eq: estimates} for each solution pair and
    \[
    \frac{1}{(Q;Q)_\infty}= \sum_{n\ge0} p(n) Q^n =1+O(Q), \qquad Q \to 0
    \]
    we obtain 
    \begin{equation} \label{eq: estimates Hj}
        \mathcal H_0(Q)=O(1), \qquad \mathcal H_1(Q)=O(Q),  \qquad Q \to 0
    \end{equation}
    Hence, to prove uniqueness, it suffices to show that \eqref{eq:homog2}
    has no nontrivial holomorphic $Q$-series solution satisfying \eqref{eq: estimates Hj}.
    \end{remark}

\begin{remark}
    The substitution map $q \to q^2$ is injective on the ring $\C[[q]]$ of the formal power series in $q$ with coefficients in $\C$. Thus, the uniqueness of the pair $(\chi_0(q),\chi_1(q))$ follows once the uniqueness of the corresponding pair $(\chi_0(Q),\chi_1(Q))$ is established.
\end{remark}

\begin{remark}
    The mf$_5$ transformation laws \eqref{eq:mf5-identity} have a special structural feature that motivates both the substitution $q \to q^2$ and the assumptions in \Cref{thm: uniq mf$_5$}: namely, the equations for $\chi_j(q)$ relate the value at $q$ to the value at $q_1^4$. The reason this is special is that, upon the substitution, one obtains a set of transformation laws for the functions $\chi_j(Q)$ that relate $Q=Q(\tau)=q^2= e^{2\pi i \tau}$ to  $Q_1 :=q_1^2 = e^{2\pi i (-1/\tau)}.$ This describes the behavior of $\chi_j(Q)$ under the inversion matrix $S$ since $Q(-1/ \tau)= Q_1.$ Hence we have a complete description of the transformation behavior of $\chi_j(Q)$ under $SL_2(\Z)= \langle S, T\rangle,$ since $\chi_j(Q)$ are invariant under $T.$ It therefore suffices to understand the boundary behavior modulo the action of $SL_2(\Z),$ that is, at the unique cusp $\infty$ for $SL_2(\Z)$. Hence, the only boundary condition one needs is at $\tau=i\infty$, equivalently at $Q=0$.
\end{remark}

\begin{remark}
    In this sense, the uniqueness proof for $\text{mf}_5$ is simpler than the corresponding proof for $\text{mf}_3$ in Section \ref{sec:mf$_3$}.
\end{remark}

\subsection{Uniqueness for mf$_5$}\label{sec:H23 proof}

\begin{thm} \label{thm: uniq mf$_5$}
The pair of order $5$ mock theta functions $(\chi_0,\chi_1)$ is the unique pair of functions holomorphic in $\mathbb D$
satisfying \eqref{eq:transf-chi}.
\end{thm}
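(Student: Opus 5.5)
The plan is to reduce to the homogeneous normalized system \eqref{eq:homog2} and show it has no nontrivial solution with the boundary behaviour \eqref{eq: estimates Hj}. By the remarks preceding the theorem, this proves uniqueness of $(\chi_0(Q),\chi_1(Q))$, and hence of $(\chi_0(q),\chi_1(q))$. The tool is a Wronskian argument that produces a scalar modular form of negative weight for the full group $SL_2(\Z)$.

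\textbf{Step 1: a vector-valued modular function.} Set
\[
F(\tau)=\begin{pmatrix}F_0(\tau)\\ F_1(\tau)\end{pmatrix}:=\begin{pmatrix}Q^{-1/20}\mathcal H_0(Q)\\ Q^{-9/20}\mathcal H_1(Q)\end{pmatrix}.
\]
Equation \eqref{eq:homog2} says $F(-1/\tau)=-MF(\tau)$, using $M^2=\mathbb 1$ to invert. Since $\mathcal H_j$ are $Q$-series, we also have $F(\tau+1)=\mathrm{diag}(e^{-\pi i/10},e^{-9\pi i/10})F(\tau)$. Thus $F$ is a weight $0$ vector-valued modular function for $SL_2(\Z)$, holomorphic on $\mathbb H$, with respect to the representation $\rho$ defined by $\rho(S)=-M$ and this diagonal $\rho(T)$. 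By \eqref{eq: estimates Hj}, $F_0=O(Q^{-1/20})$ and $F_1=O(Q^{11/20})$ as $Q\to0$. Because $\mathcal H_j$ are power series, the behaviour at $Q=0$ is a genuine expansion in powers of $Q$ shifted by these exponents, and this is the only cusp data needed.

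\textbf{Step 2: the Wronskian.} Let $D=\frac{1}{2\pi i}\frac{d}{d\tau}=Q\frac{d}{dQ}$. Since the $F_j$ have weight $0$, $DF$ transforms like a weight $2$ vector: $DF(\gamma\tau)=(c\tau+d)^2\rho(\gamma)DF(\tau)$. Define
\[
W:=F_0\,DF_1-F_1\,DF_0 .
\]
Then $W(\gamma\tau)=(c\tau+d)^2\det\rho(\gamma)\,W(\tau)$, so $W(-1/\tau)=\tau^2\det(-M)W=-\tau^2W$ and $W(\tau+1)=e^{-\pi i}W=-W$.

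Compare with $\eta^{12}$. By \eqref{eq:eta tlaws}, $\eta^{12}(-1/\tau)=(-i\tau)^6\eta^{12}=-\tau^6\eta^{12}$ and $\eta^{12}(\tau+1)=e^{\pi i}\eta^{12}=-\eta^{12}$. Hence $G:=W/\eta^{12}$ satisfies $G(\gamma\tau)=(c\tau+d)^{-4}G(\tau)$ for $\gamma=S,T$, and therefore for all of $SL_2(\Z)$. Since $\eta$ has no zeros on $\mathbb H$, $G$ is holomorphic there.

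At the cusp, write $F_0=aQ^{-1/20}+\dots$ and $F_1=bQ^{11/20}+\dots$. Each term of $W$ then carries $Q^{1/2}$ times a power series in $Q$, while $\eta^{12}=Q^{1/2}(1+O(Q))$. So $G$ is holomorphic at $\infty$, i.e.\ $G$ is a holomorphic modular form of weight $-4$ for $SL_2(\Z)$. Such forms vanish (valence formula), so $W\equiv0$.

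\textbf{Step 3: concluding.} $W\equiv0$ means $F_0,F_1$ are linearly dependent over $\C$ on $\mathbb H$. Suppose $F_0\not\equiv0$, so $F_1=cF_0$. Applying $T$ gives $c\,e^{-9\pi i/10}F_0=c\,e^{-\pi i/10}F_0$, hence $c=0$ and $F_1\equiv0$. Then the second row of $F(-1/\tau)=-MF(\tau)$ gives $0=-M_{21}F_0(\tau)$ with $M_{21}=\frac{2}{\sqrt5}\sin\frac{2\pi}{5}\neq0$, so $F_0\equiv0$, a contradiction. If instead $F_0\equiv0$, the first row gives $M_{12}F_1\equiv0$, so again $F\equiv0$.

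Thus $\mathcal H_0=\mathcal H_1=0$, hence $\Delta\mathcal K_j=0$, which is the uniqueness claimed in \Cref{thm: uniq mf$_5$}.

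The main point to check carefully is Step 2. One must verify that the determinant character of $\rho$ exactly matches the multiplier of $\eta^{12}$, for both $S$ and $T$. One must also verify that the cusp exponents $-1/20$ and $11/20$ sum to precisely $1/2$, so that $W/\eta^{12}$ is holomorphic at $\infty$; any weaker vanishing would only give a meromorphic form, and the argument would fail. If the bookkeeping had failed, the fallback would be to multiply by a different power of $\eta$ and use dimension formulas for vector-valued modular forms.
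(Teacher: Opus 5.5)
Your proposal is correct and follows essentially the paper's route. You reduce to \eqref{eq:homog2}, take the Wronskian of $(Q^{-1/20}\mathcal H_0,\,Q^{-9/20}\mathcal H_1)$, normalize by $\eta^{12}$ to get a scalar $SL_2(\Z)$-object that must vanish, and then use the distinct $T$-eigenvalues and the nonzero off-diagonal entries of $M$; the only difference is cosmetic: your $W/\eta^{12}$ is a holomorphic weight $-4$ form, killed by the valence formula (needing only holomorphy at $\infty$), whereas the paper's $W^3/\eta^{12}$ is a weight $0$ modular function vanishing at $\infty$, hence identically zero.
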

In view of the discussion in Section \ref{sec:eqnorm}, the proof of this theorem follows from Lemma \ref{prop:H23-vanish} below.

\begin{lemma}\label{prop:H23-vanish}
Let $\mathcal H_0(Q)$ and $\mathcal H_1(Q)$ be holomorphic $Q$-series on $\mathbb{D}$ satisfying
\eqref{eq:homog2} and \eqref{eq: estimates Hj}. Then $\mathcal H_0\equiv 0$ and $\mathcal H_1\equiv 0$.
\end{lemma}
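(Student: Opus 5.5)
The plan is to convert the vector identity \eqref{eq:homog2} into a statement about a vector-valued modular form for the full group $SL_2(\mathbb Z)$. Then we pair it with a known holomorphic vector-valued form of the contragredient type, obtaining scalar modular objects that vanish to high order at the unique cusp $\infty$, and which must therefore vanish identically.

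Set
\[
v(\tau):=\begin{pmatrix} Q^{-1/20}\mathcal H_0(Q)\\ Q^{-9/20}\mathcal H_1(Q)\end{pmatrix}.
\]
By \eqref{eq:homog2} we have $v(-1/\tau)=-M\,v(\tau)$; since $M^2=\mathbb 1$ this relation is consistent with $S^2$ acting trivially. Because $\mathcal H_j$ are power series in integer powers of $Q$, we also have $v(\tau+1)=\mathrm{diag}(e^{-2\pi i/20},e^{-18\pi i/20})\,v(\tau)$. Thus $v$ is a holomorphic weight-$0$ vector-valued form on $\mathbb H$ for a $2$-dimensional representation $\rho$ of $SL_2(\mathbb Z)$ generated by $\rho(S)=-M$ and $\rho(T)=\mathrm{diag}(\zeta^{-1},\zeta^{-9})$ with $\zeta=e^{2\pi i/20}$. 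This is (up to a character) the level-$5$ Rogers--Ramanujan representation. The estimates \eqref{eq: estimates Hj} give the cusp behaviour $v_0=O(Q^{-1/20})$ and $v_1=O(Q^{11/20})$.

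Next, construct an auxiliary holomorphic vector-valued form $w(\tau)$, of some weight $k$ and for the dual representation $\rho^{\vee}$. Because $M$ is real orthogonal and symmetric, $\rho^{\vee}$ has $S$-matrix $-M$ and conjugate $T$-phases. For $w$ I would try Rogers--Ramanujan type objects $Q^{-1/60}G(Q)$ and $Q^{11/60}H(Q)$, whose $S$-matrix is the same sine matrix with entries permuted. I would multiply these by suitable powers of $\eta$, and possibly apply the permutation $(G,H)\mapsto(H,G)$ or the substitution $\tau\mapsto 5\tau$, so that the phases become $(\zeta,\zeta^{9})$ times a common character $e^{2\pi i k/24}$ that is absorbed by $\eta^{2k}$. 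The scalar $F:=w^{T}v=w_0v_0+w_1v_1$ is then a weight-$k$ modular form for $SL_2(\mathbb Z)$, holomorphic on $\mathbb H$ because $\mathcal H_j$, $G$, $H$ and $\eta$ are holomorphic there.

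Its order at $\infty$ is at least $\min(\ord w_0-\tfrac1{20},\ \ord w_1+\tfrac{11}{20})$. Choosing the $\eta$-power so that this exceeds $k/12$, the valence formula (or, when $k=0$, the remark that holomorphic modular functions are constant, together with $F(\infty)=0$) forces $F\equiv 0$. Repeating the argument with a second auxiliary form $w'$ for the same representation, for instance $w$ multiplied by a suitable weight-$0$ modular function, or the Rogers--Ramanujan vector in the other ordering, gives $w'^{T}v\equiv0$ as well. Hence $v\equiv0$ wherever $\det(w,w')\ne0$. This determinant is itself a modular form that is not identically zero, as one checks from its leading $Q$-term, so $v\equiv 0$ on an open set and hence $\mathcal H_0\equiv\mathcal H_1\equiv0$ by analyticity.

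The main obstacle is the bookkeeping in the construction of $w$. One must match the representation exactly: the sign in $-M$, the permutation of the two sine entries, and the $T$-phases modulo the $\eta$-character. At the same time the cusp order of $w^{T}v$ must strictly exceed $k/12$, which is where the sharp bound $\mathcal H_1=O(Q)$ from \eqref{eq: estimates Hj} is essential. The first thing I would try is $w=\eta(\tau)^{2k}\,(Q^{11/60}H,\ Q^{-1/60}G)^{T}$ with $k$ chosen minimal, computing the exponents explicitly. If the orders fall short, I would instead use $F=v_0^{a}v_1^{b}$-type invariants: for example, the $S$-invariant quadratic form $v^{T}v$ combined with the $T$-eigen-decomposition, raising to the fifth power if necessary to kill the phases. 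I would then apply the same constancy argument.
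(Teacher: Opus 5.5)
Your framework is sound: pair $v$ with a holomorphic form $w$ for $\rho^\vee$ and apply the valence formula. The gap is that the auxiliary form $w$, which carries the whole argument, is never produced, and the candidate you propose cannot work.

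\textbf{The Rogers--Ramanujan candidate is in the wrong representation.} The representation $\rho$ is \emph{not} the Rogers--Ramanujan representation up to a character. Indeed, $\rho^\vee(T)=\mathrm{diag}(\zeta,\zeta^{9})$ has eigenvalue ratio $\zeta^{8}=e^{4\pi i/5}$, whereas $(q^{-1/60}G,\,q^{11/60}H)$ has ratio $e^{2\pi i/5}$, or $e^{-2\pi i/5}$ after swapping components. Twisting by a character of $SL_2(\Z)$, i.e.\ by a power of $\eta^2$, multiplies both eigenvalues by the same scalar. So no choice of $\eta^{2k}$ and no permutation can repair this. Correspondingly, the Rogers--Ramanujan $S$-matrix has $\sin(2\pi/5)$ on its diagonal while $M$ has $\sin(\pi/5)$: the two representations are Galois conjugate under $\sqrt5\mapsto-\sqrt5$, not isomorphic. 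The substitution $\tau\mapsto5\tau$ destroys $S$-covariance altogether.

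\textbf{The $\eta$-power and the fallback do not help.} The $\eta$-power is not a free parameter for the inequality $\ord_\infty(w^Tv)>k/12$. Multiplying $w$ by $\eta^{2k}$ raises both sides by $k/12$, so the margin must come from the intrinsic exponents of $w$. Your fallback also fails: $v^Tv$ is $S$-invariant, but $v_0^2$ and $v_1^2$ acquire the different phases $e^{\mp\pi i/5}$ under $T$. Hence no power of $v^Tv$ is $T$-covariant.

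\textbf{How your route could be completed.} It needs input you have not supplied. By the free-module theorem for two-dimensional vector-valued modular forms, the minimal holomorphic form $w$ for $\rho^\vee$ has weight $2$ and exact exponents $(1/20,9/20)$. Concretely, it is $\eta$ times the weight-$3/2$ unary theta vector that is the shadow of $(\chi_0,\chi_1)$. The argument then runs as follows.
\begin{enumerate}
\item $w^Tv\in M_2(SL_2(\Z))=\{0\}$, and its constant term forces $\mathcal H_0(0)=0$.
\item Only after this improvement is $(\vartheta w)^Tv$, with $\vartheta w:=\tfrac{1}{2\pi i}w'-\tfrac16E_2w$, a weight-$4$ form that is $O(Q)$, hence zero.
\item $\det(w,\vartheta w)$ is a nonzero multiple of $\eta^{12}$, so $v\equiv0$.
\end{enumerate}

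\textbf{The paper's route.} The paper bypasses all auxiliary forms by taking the Wronskian of $v$ itself. $W=v_0v_1'-v_0'v_1$ picks up $\det D=-1$ under $T$ and $-\tau^2$ under $S$. So $W^3/\eta^{12}$ is a holomorphic $SL_2(\Z)$-invariant function, and it is $O(Q)$ at $\infty$ by the bounds $\mathcal H_0=O(1)$, $\mathcal H_1=O(Q)$; hence $W\equiv0$. Then $v_1/v_0$ is constant, and the $T$-phase $e^{-4\pi i/5}\neq1$ forces it to be $0$.
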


\begin{proof}
Define the holomorphic functions on $\mathbb H$ 
\begin{equation}\label{eq:v23 def}
v_0(\tau):=Q(\tau)^{-1/20}\mathcal H_0(Q(\tau)),\qquad
v_1(\tau):=Q(\tau)^{-9/20}\mathcal H_1(Q(\tau)),
\qquad
v(\tau):=\binom{v_0(\tau)}{v_1(\tau)},
\end{equation}
and observe that \eqref{eq:homog2} can be written as
\begin{equation}\label{eq:v M}
v(\tau)=-M\,v \left(-\frac1\tau\right)
\end{equation}
where $M$ is in \eqref{eq:m}.
Then since $M^2=\mathbb{1}$ we have
\begin{equation}\label{eq:S v}
v \left(-\frac1\tau\right)=-M\,v(\tau).
\end{equation}
On the other hand, since $\mathcal H_0$ and $\mathcal H_1$ are $Q$-series, with $Q=e^{2\pi i \tau}$, we have $\mathcal H_j(Q(\tau+1))=\mathcal H_j(Q(\tau))$, and thus \eqref{eq:v23 def} gives
\begin{equation}\label{eq:T v}
v(\tau+1)=D\,v(\tau),
\qquad
D:=\begin{pmatrix}e^{-\pi i/10}&0\\ 0&e^{-9\pi i/10}\end{pmatrix}.
\end{equation}
Hence, we now have a vector-valued modular object $v(\tau),$ so we consider its Wronskian determinant and show that, after normalization by a suitable power of $\eta,$ it gives rise to a scalar modular object.\footnote{This is the classical modular Wronskian method \cite{FrancMason}.} Indeed, consider the Wronskian
\begin{equation}\label{eq:Wronskian}
W(\tau):=\det \Bigl( \bigl(v(\tau),v'(\tau)\bigr) \Bigr)=v_0(\tau)v_1'(\tau)-v_0'(\tau)v_1(\tau).
\end{equation}
where derivatives are taken with respect to $\tau.$ Now using \eqref{eq:T v}, we obtain 
\begin{equation}\label{eq:W T}
W(\tau+1)=\det\Bigl(D \cdot\bigl(v(\tau), v'(\tau) \bigr) \Bigr) =\det(D)\,W(\tau)=(-1)\,W(\tau).
\end{equation}
Similarly, differentiating \eqref{eq:S v}, we get
\begin{equation}\label{eq:W S}
W \left(-\frac1\tau\right)
=\det \Bigl( \bigl(-Mv(\tau),\,\tau^2 (-M)v'(\tau)\bigr) \Bigr)
=\tau^2\det(-M)\,W(\tau)
=-\tau^2 W(\tau).
\end{equation}
Therefore, it is easy to check using \eqref{eq:eta tlaws}, \eqref{eq:W T}, and \eqref{eq:W S} that
\begin{equation}\label{eq:F def}
G(\tau):=\frac{W(\tau)^3}{\eta(\tau)^{12}}
\end{equation}
is invariant under both $S:\tau\mapsto -1/\tau$ and $T:\tau\mapsto\tau+1$. Moreover, $G$ is holomorphic on $\mathbb H,$ and since one can easily show,  using \eqref{eq: estimates Hj}, that $G(\tau)=O(Q)$ as $\tau\to i\infty$, it follows that $G$ extends holomorphically to the cusp $\infty$ with $G(\infty)=0$. However, the only holomorphic modular functions for $SL_2(\Z)$ are constants, and since G is such a function with $G(\infty)=0,$ it follows that $G \equiv 0$ on $\mathbb H$. This in turn implies that $W(\tau)\equiv 0$ on $\mathbb H$.

If $\mathcal H_0\equiv 0$ then \eqref{eq:homog2} forces $\mathcal H_1\equiv 0$, and there is nothing to show. Thus, assume WLOG that $\mathcal H_0\not\equiv 0$. This implies $v_0\not\equiv 0.$ Consider now the function $r$ given by $r(\tau):=v_1(\tau)/v_0(\tau)$, and note that $\tau \to r(\tau)$ is meromorphic on $\mathbb H.$ Since $W(\tau)$ vanishes on $\mathbb H,$ it follows that 
\begin{equation}
  0=W(\tau)=v_0(\tau)^2\left(\frac{v_1(\tau)}{v_0(\tau)}\right)'= v_0(\tau)^2 \, r'(\tau),
\end{equation}
which implies that $r$ is constant on $\mathbb H$. 
However, $r$ must be identically $0$ on $\mathbb H,$ since from \eqref{eq:T v}, we have
\begin{equation}
    r(\tau+1)=\frac{e^{-9\pi i/10}v_1(\tau)}{e^{-\pi i/10}v_0(\tau)}
=e^{-4\pi i/5}\, r(\tau)
\end{equation}
Therefore, $r \equiv 0$. Hence $v_1\equiv 0,$ and \eqref{eq:homog2} then gives $v_0\equiv 0$.

\end{proof}

\begin{remark}
    We close this section by noting that, besides the transformation laws in \eqref{eq:transf-chi}, the pair $(\chi_0,\chi_1)$ also satisfies a second $2 \times 2$ system of transformation laws that relate $-q$ to $-q_1$ (see e.g., \cite[p.\ 118]{GM12}). In addition to the Wronskian method introduced above, the study of this system requires an analysis of the behavior at the relevant cusps (see Section \ref{sec:mf$_3$} for an illustration in the mf$_3$ case). The corresponding result for the $-q \to -q_1$ system will appear elsewhere \cite{inprep}.
\end{remark}

\section{Uniqueness proof for mf\texorpdfstring{$_3$}{3}}
\label{sec:mf$_3$}

The relevant order-3 mock theta functions (${\rm mf}_3$) for this section are \cite{Watson,GM12}
\begin{eqnarray}
    \omega(q)\,&:=&\,\sum_{n=0}^{\infty}\frac{q^{\,2n(n+1)}}{(q;q^2)_{n+1}^{\,2}}
    \quad,\qquad |q|<1 
 \label{eq: omeg def}
 \\
f(q)\,&:=&\,\sum_{n=0}^{\infty}\frac{q^{\,n^2}}{(-q;q)_{n}^{\,2}}
    \quad,\qquad |q|<1 
 \label{eq: f def}
\end{eqnarray}

These are related by the mock modular transformation laws \cite{Watson,GM12}
\begin{eqnarray}
q^{2/3}\,\omega(-q)
&=&
-\sqrt{\frac{\pi}{\alpha}}\,q_1^{2/3}\,\omega(-q_1)
+\sqrt{\frac{12\alpha}{\pi}}\,W_3(\alpha) 
\quad, \qquad \alpha>0
\label{eq:omega tlaw}
\\
q^{2/3}\omega(q)&=&\sqrt{\frac{\pi}{4\alpha}}\,q_1^{-1/12} f(q_1^2)\,-\,\sqrt{\frac{3\alpha}{\pi}}\,W_2 \Bigl(\frac{\alpha}{2}\Bigr)
\quad, \qquad \alpha>0
\label{eq:omega f law}
\end{eqnarray}
Here $W_3$ and $W_2$ are the following Mordell-Appell integrals (see, e.g., \cite[p.~100, 114]{GM12}):
\begin{eqnarray}
 W_3(\alpha)&:=&\int_0^\infty e^{-3 \alpha x^2}\,\frac{\sinh(\alpha x)}{\sinh(3\alpha x)}\,dx
 \quad , \qquad \Re\alpha>0
 \label{eq:w3}
 \\
W_2(\alpha)&:=&\int_0^\infty e^{-\frac32\alpha x^2}\,\frac{\cosh(\alpha x)}{\cosh(3\alpha x)}\,dx
\quad , \qquad \Re\alpha>0
\label{eq:w2}
\end{eqnarray}

\begin{remark}
\label{rem: omega f unique}
  In this section we prove that the system \eqref{eq:omega tlaw}, \eqref{eq:omega f law} has a unique holomorphic solution. In fact, the hypotheses needed for the uniqueness of $\omega$ are weaker  than the full assumption \eqref{eq:omega tlaw}, \eqref{eq:omega f law}.  We show that \eqref{eq:omega tlaw} has a unique solution $\omega$ subject to the growth condition resulting from \eqref{eq:omega f law}, see Lemma~\ref{lem: growth cond} below. Once uniqueness of $\omega$ is established,  $f$ is also clearly uniquely determined by  \eqref{eq:omega f law}.
\end{remark}

\begin{thm}\label{T:omega uniqueness}
There is a unique pair $(f,\omega)$ of functions  holomorphic in $\mathbb D$  satisfying the system \eqref{eq:omega tlaw} \eqref{eq:omega f law} for $q$, $q_1$ in $\mathbb{D}$. 
\end{thm}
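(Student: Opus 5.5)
Existence is classical: Ramanujan's $(f,\omega)$ satisfy \eqref{eq:omega tlaw} and \eqref{eq:omega f law}. So the work is uniqueness, and the plan mirrors the mf$_5$ argument with a scalar equation in place of the $2\times2$ system.

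\emph{Step 1: reduce to a homogeneous problem.} Suppose $(f,\omega)$ and $(\tilde f,\tilde\omega)$ both solve the system, and set $h:=\omega-\tilde\omega$ and $g:=f-\tilde f$, both holomorphic on $\mathbb D$. Subtracting the two copies of \eqref{eq:omega tlaw} removes $W_3$ and gives
\[
q^{2/3}h(-q)=-\sqrt{\pi/\alpha}\,q_1^{2/3}h(-q_1).
\]
Subtracting the two copies of \eqref{eq:omega f law} gives
\[
q^{2/3}h(q)=\sqrt{\pi/(4\alpha)}\,q_1^{-1/12}g(q_1^2).
\]
As in Section~\ref{sec:eqnorm}, divide by a suitable eta quotient to absorb the weight $1/2$ factor. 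Put
\[
u(\tau):=\frac{q^{2/3}h(-q)}{\eta(\tau)} .
\]
By \eqref{eq:eta tlaws} the first identity becomes $u(-1/\tau)=\pm c\,u(\tau)$ for an explicit constant $c$ of modulus $1$. Since $h(-q)$ is a power series in $q=e^{\pi i\tau}$, we also get $u(\tau+2)=\zeta\,u(\tau)$ for an explicit root of unity $\zeta$. Hence $u$, or a suitable power $u^N$ chosen to kill the multipliers, is invariant under $\Gamma_\theta=\langle S,T^2\rangle$ (Example~\ref{ex:theta-group}).

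\emph{Step 2: behaviour at the two cusps of $\Gamma_\theta$.} At the cusp $\infty$ (width $2$), expand in $q$. There $u^N=O(q^{N(2/3-1/12)})$, so $u^N$ vanishes at $\infty$. At the cusp $1$, take $\sigma_1=TS$ or similar, so that $\tau\mapsto 1-1/\tau$. Moving from the cusp $\infty$ to the cusp $1$ replaces $-q$ by $q$, since $T$ sends $q\mapsto -q$. This is where the second relation enters, and it is the content of the growth condition of Lemma~\ref{lem: growth cond}. Using it, $h(q)$ near $\tau\to 1$ is expressed through $q_1^{-1/12}g(q_1^2)$, which is a holomorphic series in the local parameter times an explicit fractional power. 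From this I will read off $\ord_1(u^N)$ and aim to show it is $\ge 0$, i.e.\ $u^N$ is bounded at the cusp $1$.

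\emph{Step 3: conclude.} $u^N$ is then a holomorphic modular function for $\Gamma_\theta$ that is holomorphic at both cusps. By the analogue of the remark that holomorphic modular functions are constant (compactness of $\Gamma_\theta\backslash(\mathbb H\cup\Q\cup\{\infty\})$), $u^N$ is constant, and since it vanishes at $\infty$ it is $\equiv 0$. Hence $h\equiv0$, and then $g(q_1^2)\equiv0$ from the second relation, so $g\equiv0$ as noted in Remark~\ref{rem: omega f unique}.

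\emph{Main obstacle.} The hard step is the order count at the cusp $1$. The exponents $2/3$, $-1/12$, $1/24$ must combine so that the pole order coming from $q_1^{-1/12}$ in \eqref{eq:omega f law} is compensated by the eta normalization. If the naive normalization by $\eta(\tau)$ leaves a negative order at $1$, I would switch to a different weight-$1/2$ normalizer with better behaviour at $1$, such as $\eta(\tau)^2/\eta(\tau/2)$ or $\eta(2\tau)^2/\eta(\tau)$, before applying the constancy argument. Alternatively, I would accept a pole of bounded order at $1$ and a zero at $\infty$, and count total order (valence formula for $\Gamma_\theta$, index $3$) to force vanishing.
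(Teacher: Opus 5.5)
Your outline has the same structure as the paper's proof. You reduce to a homogeneous equation for $U=q^{2/3}(\omega-\tilde\omega)(-q)$, divide by a nonvanishing weight-$\tfrac12$ form, take a $\Gamma_\theta$-invariant power, control the cusp $\infty$ by holomorphy at $q=0$ and the cusp $1$ by \eqref{eq:omega f law}, and conclude on $X_{\Gamma_\theta}$.

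However, the step you flag as the main obstacle comes out against your primary plan. With $\eta$ you get $u(-1/\tau)=-u(\tau)$ and $u(\tau+2)=e^{7\pi i/6}u(\tau)$. So $N=12$, and $\ord_\infty(u^{12})=7+12\ord_0(h)$ with your $h=\omega-\tilde\omega$. At the cusp $1$ put $\tau=1-1/z$ and $w=e^{2\pi i z}$. Since $-q(\tau)=q(-1/z)$, the subtracted form of \eqref{eq:omega f law}, in which $W_2$ cancels, evaluated at $-1/z$ gives exactly
\[
u\!\left(1-\tfrac1z\right)^{12}=c\,w^{-1}\,\frac{g(w)^{12}}{(w;w)_\infty^{12}},\qquad c\neq0 .
\]
Hence $\ord_1(u^{12})=-1+12\ord_0(g)$, which equals $-1$ whenever $g(0)\neq0$.

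So Step 3 as written (bounded at both cusps, hence constant) cannot be obtained from the local analysis. You need your last alternative: a nonzero meromorphic function on $X_{\Gamma_\theta}$ has total order zero (at weight $0$ the index plays no role). This is exactly how the paper concludes, and here it closes with $\ord_\infty+\ord_1\ge 6>0$.

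Your proposed replacement normalizers would not help. We have $\eta(2\tau)^2/\eta(\tau)=\tfrac12\vartheta_2(\tau)$ and $\eta(\tau)^2/\eta(\tau/2)=\tfrac12\vartheta_2(\tau/2)$, and neither is modular for $\Gamma_\theta$: under $S$ they go to multiples of $\vartheta_4(\tau)$ and $\vartheta_4(2\tau)$ respectively. If you want Step 3 literally, $\eta^2/\vartheta_3$ does the job. Set $V:=U\vartheta_3/\eta^2$; then $V(-1/\tau)=-V(\tau)=V(\tau+2)$. The function $V^2$ has order $\ge1$ at $\infty$ and order $2\ord_0(g)\ge0$ at $1$. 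So $V^2$ is a holomorphic modular function vanishing at $\infty$, hence $V\equiv0$.

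The one real difference from the paper is at the cusp $1$. The paper normalizes by $\vartheta_3$, getting $A^6$ with $\ord_\infty\ge4$ and $\ord_1\ge-1$. It first derives the growth bound of Lemma~\ref{lem: growth cond of omega} from \eqref{eq:omega f law} using a crude estimate on $W_2$. It then obtains meromorphy at $1$ by Riemann's removable singularity theorem.

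Your use of the exact subtracted relation gives the Laurent expansion at $1$ directly and avoids the sector estimates. In exchange, the paper's route proves the slightly stronger Lemma~\ref{lem: growth cond}: \eqref{eq:omega tlaw} together with the growth bound alone already determines $\omega$.
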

\begin{proof}[Proof of Theorem \ref{T:omega uniqueness}]
\noindent In the following, fix  $\theta_0$ with $0<\theta_0<\pi/2$ and we assume $\alpha \in \C$ with $|\arg\alpha|\le \theta_0$.  
\begin{lemma} 
\label{lem: growth cond of omega}
  The function $\omega$ satisfies the growth bound
  \begin{equation}\label{eq:growth-omega}
     \omega(e^{-\alpha}) \lesssim  |\alpha|^{-1/2} ~|q_1|^{\,-1/12}\text{ as }\alpha \to 0  \text{ if }|\arg\alpha|\le \theta_0
  \end{equation}
\end{lemma}
\begin{proof}   
From the Mordell-Appell integral in \eqref{eq:w2}, we
find the trivial estimate $W_2 \Bigl(\frac{\alpha}{2}\Bigr)\lesssim |\alpha|^{-1/2}$ for $\alpha\to0$ with $|\arg\alpha|\le\theta_0,$ since the function $\frac{\cosh(z)}{\cosh(3z)}$ is uniformly bounded on the sector $\{z: |\arg z| \le \theta_0 \}$ (the bound depends only on $\theta_0$). The claim thus follows by applying the triangle inequality to the transformation law in \eqref{eq:omega f law}. Here $f$ is Ramanujan's mf$_3,$ in \eqref{eq: f def}, which satisfies $f(q_1^2)=1 +O(q_1^2) \lesssim 1$ as $\alpha \to 0$ with $|\arg \alpha|<\theta_0,$ since $q_1 \to 0$ as $\alpha \to 0$ with $|\arg \alpha|<\theta_0.$

\end{proof}
\begin{lemma} \label{lem: growth cond}
There is a unique holomorphic function $\omega:\mathbb{D}\to\mathbb{C}$ satisfying \eqref{eq:omega tlaw} and \eqref{eq:growth-omega}. 

\end{lemma}

\begin{remark}\label{R:unique f}
    The assumptions in Lemma \ref{lem: growth cond} are the boundary conditions associated with the transformation law \eqref{eq:omega tlaw}. Indeed, writing $\Omega(\tau):= \omega(-q(\tau)),$ we observe that \eqref{eq:omega tlaw} describes the behavior of $\Omega$ under the modular transformation $S:\tau\to -1/\tau$, while $\Omega$ is invariant under $T^2$, where $T:\tau\to\tau+1$, since $q(\tau+2)=q(\tau).$ We thus understand the behavior of $\Omega$ under the group $\Gamma_\theta= \langle S, T^2 \rangle.$ Hence, it remains to understand the boundary behavior modulo $\Gamma_\theta,$ that is, the cusps $1$ and $\infty$ of $\Gamma_\theta$, which correspond to the growth condition and the analyticity at $q=0$ assumption made in Lemma \ref{lem: growth cond}, respectively (see the proof below).
\end{remark}

\begin{proof} 
Lemma \ref{lem: growth cond of omega} establishes the existence of the solution, so it suffices to prove uniqueness.

Suppose $\omega$ and $\omega_1$ both satisfy the conditions of Lemma \ref{lem: growth cond}. Let $\Delta \omega:= \omega-\omega_1,$ and assume for contradiction that $\Delta \omega \not\equiv 0$.  Then subtracting \eqref{eq:omega tlaw} for $\omega$ and $\omega_1$ gives the homogeneous equation 
\begin{equation}\label{eq:omega homog}
q^{2/3}\,\Delta \omega(-q) = -\sqrt{\frac{\pi}{\alpha}}\,q_1^{2/3}\,\Delta \omega(-q_1), \qquad \alpha>0.
\end{equation}
\begin{remark}
$\Delta \omega$ is holomorphic on $\mathbb D$ and still satisfies the bound in \eqref{eq:growth-omega}. 
\end{remark}
\noindent Consider the holomorphic function on $\mathbb H$
\begin{equation} \label{eq:Udef}
    U(\tau):=q^{2/3}\Delta \omega(-q), \qquad \tau\in\mathbb H
\end{equation}
and \eqref{eq:omega homog} gives\footnote{Both sides of \eqref{eq:omega homog} are holomorphic on $\{\alpha:\Re(\alpha)>0\},$ and they coincide for $\alpha\in \mathbb{R}^+$. Hence,  by the identity theorem,  \eqref{eq:omega homog} holds for all $\Re (\alpha)>0$.}
\begin{equation}\label{eq:U S}
    U(-1/\tau)=-\sqrt{-i\tau}\,U(\tau),\qquad \tau\in\mathbb H.
\end{equation}
\begin{remark} The principal branch $\sqrt{-i\tau}$ is  well-defined on $\mathbb H$ since $\Re(-i\tau)=\Im\tau>0$.
\end{remark}
\noindent Now define 
\begin{equation}
    A(\tau):=\frac{U(\tau)}{\vartheta_3(\tau)},
\end{equation}
where $\vartheta_3$ is the Jacobi theta function \cite{WW}. It is well-known that $ \vartheta_3$ has no zeros on $\mathbb H$ (see e.g. \cite[\href{https://dlmf.nist.gov/20.5.E9}{(20.5.9)}]{DLMF}) and satisfies the transformation laws \cite{WW}
\begin{equation}\label{eq:theta3 S}
    \vartheta_3(\tau+2)=\vartheta_3(\tau), \qquad \vartheta_3(-1/\tau)=\sqrt{-i\tau}\,\vartheta_3(\tau), \qquad \tau\in\mathbb H.
\end{equation}
Therefore, $A$ is holomorphic on $\mathbb H$, and using \eqref{eq:U S},\eqref{eq:theta3 S}, we observe that
\begin{equation} \label{eq: A under S}
A(-1/\tau)=\frac{U(-1/\tau)}{\vartheta_3(-1/\tau)} =\frac{-\sqrt{-i\tau}\,U(\tau)}{\sqrt{-i\tau}\,\vartheta_3(\tau)}=-A(\tau).
\end{equation}
On the other hand, by  Definition \ref{def:cusp} and \eqref{eq:Udef}, we have $U(\tau+2)=e^{\frac{4i\pi}{3}}U(\tau)$. Thus, by \eqref{eq:theta3 S}, we obtain 
\begin{equation}\label{eq: A under T2}
    A(\tau+2)=e^{\frac{4i\pi}{3}}A(\tau).
\end{equation}
Hence, by \eqref{eq: A under S},\eqref{eq: A under T2}, we see that the function
\begin{equation}\label{eq:h def}
h(\tau):=A(\tau)^6=\left(\frac{q^{2/3}\Delta \omega(-q)}{\vartheta_3(\tau)}\right)^6
\end{equation}
is holomorphic on $\mathbb H$ and invariant under the theta group $\Gamma_\theta$ (defined in Example \ref{ex:theta-group}), which has only two cusps: $1$ and $\infty$. Moreover, $h$ has the following behavior at the cusps.

\begin{lemma} If the function $h$, defined in \eqref{eq:h def}, is not identically zero, then it has the following behavior at the cusps of $\Gamma_\theta$.
    \begin{enumerate}
        \item[a.] $h$ is holomorphic at the cusp $\infty$ and $ \ord_\infty(h) \ge 4.$
        \item[b.] $h$ is meromorphic at the cusp $1$ and $\ord_1(h) \ge -1.$
\end{enumerate}
\label{lem: h beh}
\end{lemma}
\begin{proof}
The proof of Lemma \ref{lem: h beh} is a standard  calculation using Definition \ref{def:cusp}. The behavior at the cusp $\infty$ is controlled by the assumption that $\omega$ is analytic at $q=0$, while the behavior at the cusp $1$ is controlled by the growth condition in \eqref{eq:growth-omega}. We relegate the details to Appendix \ref{sec: append A}. 
\end{proof}
\begin{remark}
    Since $\Delta \omega \not\equiv 0$, we have $h \not\equiv 0,$ and it follows by Lemma \ref{lem: h beh} that $\ord_j(h)<\infty $ for $j \in \{1, \infty\}.$
\end{remark}
\begin{remark} Lemma \ref{lem: h beh}$(a)$ implies that $h(\infty)=0,$ so if $h$ is a constant function, we immediately get the contradiction $h \equiv 0$. We thus assume WLOG that $h$ is non-constant.
\end{remark}
\noindent Assume Lemma \ref{lem: h beh}. Since $h$ is $\Gamma_\theta$-invariant and is meromorphic at the cusps, it therefore extends to a non-constant meromorphic function on the compact Riemann surface  \cite[p.~13]{Duke} $ X_{\Gamma_\theta}:=\Gamma_\theta\backslash(\mathbb H\cup \Q \cup \{1, \infty\}),$ and we have \cite[cor. 10.22.]{Forster}
\begin{equation}\label{sum: ord}
    \sum_{P\in X_{\Gamma_\theta}}ord_P(h)=0.
\end{equation}
Since $h$ is holomorphic on $\mathbb H,$ we have $\ord_P(h) \ge 0$ for all $P\in X_{\Gamma_\theta} \setminus \{1, \infty\}$ \footnote{{Holomorphy of $h$ on $\mathbb H$ implies holomorphy on $X_{\Gamma_\theta} \setminus \{1, \infty\}$ (this follows from \cite[Lemma 4.11]{MilneMF} with $k=0,$ for example).}}. Therefore, if $h \not \equiv 0,$ \eqref{sum: ord} and Lemma \ref{lem: h beh} yield the contradiction 
\begin{equation}
0=\sum_{P\in X_{\Gamma_\theta}}\ord_P(h) \ge \ord_1(h)+\ord_\infty(h)\ge 3.
\end{equation}

This completes the proof of Lemma \ref{lem: growth cond}.
    \end{proof} Using Remark \ref{rem: omega f unique}, \Cref{T:omega uniqueness} follows.    \end{proof}
\begin{remark}
    { An alternative proof is given in \cite{CDGG}, \S 4.9.\footnote{We note two {small typos in \cite{CDGG},} which do not affect the flow of the arguments or the result: in (4.120) $q^2$ should be $q^4$, and in Theorem 4.116 the factor $\omega(q)\tilde{q}^{1/12}$ should read $\sqrt{\alpha}\,\omega(q)\tilde{q}^{1/12}$.} 
That approach, while effective in the cases of order 
3 (and 5) mock theta functions, relies on the use of Hauptmoduln, and it is not evident how it would extend to higher-order mfs. By contrast, the present strategy is both simpler and amenable to extension \cite{inprep}.}
\end{remark}

\section{Generalization to higher orders}

The methods presented in this work are part of a general uniqueness proof approach applicable to higher order mock theta functions. However, the higher order setting requires adaptations and the corresponding analysis becomes substantially more involved in the higher order cases. The higher order theory will appear in a separate paper \cite{inprep}.

\appendix
\label{sec:appendixa}

\section{Proof of Lemma \ref{lem: h beh}}
\label{sec: append A}
Recall the function $h$ defined in \eqref{eq:h def},
\begin{equation}
h(\tau):=A(\tau)^6=\left(\frac{q^{2/3}\Delta \omega(-q)}{\vartheta_3(\tau)}\right)^6,
\end{equation}
and the statement of Lemma \ref{lem: h beh}:

\begin{lemma} If the function $h$, defined in \eqref{eq:h def}, is not identically zero, then it has the following behavior at the cusps of $\Gamma_\theta$.
    \begin{enumerate}
        \item[a.] $h$ is holomorphic at the cusp $\infty$ and $ \ord_\infty(h) \ge 4.$
        \item[b.] $h$ is meromorphic at the cusp $1$ and $\ord_1(h) \ge -1.$
\end{enumerate}
\end{lemma}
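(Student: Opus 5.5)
For part (a), I would expand directly at $\infty$. By Example~\ref{ex:theta-group} the cusp $\infty$ has width $w_\infty=2$, so we may take $\sigma_\infty=\mathbb 1$ and the local parameter is $q_\infty=e^{2\pi i\tau/2}=e^{\pi i\tau}=q$. Then
\[
h(\tau)=q^{4}\,\frac{\Delta\omega(-q)^6}{\vartheta_3(\tau)^6},\qquad \vartheta_3(\tau)=\sum_{n\in\Z}q^{n^2}=1+2q+\cdots .
\]
Since $\Delta\omega$ is holomorphic on $\mathbb D$, $\Delta\omega(-q)^6$ is a power series in $q$. Since $\vartheta_3^{-6}$ is a power series in $q$ with constant term $1$, $h$ is a power series in $q$ divisible by $q^4$. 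Hence $h$ is holomorphic at $\infty$ with $\ord_\infty(h)\ge 4$; this is finite because $h\not\equiv0$.

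For part (b), I would take $\sigma_1=TS=\begin{pmatrix}1&-1\\1&0\end{pmatrix}$, so $\sigma_1\tau=1-1/\tau$. The width is $w_1=1$, so $h(\sigma_1\tau)$ is $1$-periodic and holomorphic on $\mathbb H$, and hence equals a Laurent series $\sum_n c_nQ^n$ on $0<|Q|<1$. It then suffices to prove the bound $|h(\sigma_1\tau)|\lesssim|Q|^{-1}$ as $\Im\tau\to\infty$, uniformly for $\tau$ in a period strip such as $|\Re\tau|\le 1/2$. Given this bound, $Q\,h(\sigma_1\tau)$ is bounded near $Q=0$, and Riemann's removable singularity theorem gives meromorphy at the cusp $1$ with $\ord_1(h)\ge-1$.

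To get the bound, put $w=-1/\tau$. Then $q(1+w)=-e^{\pi i w}$, so
\[
\Delta\omega\bigl(-q(\sigma_1\tau)\bigr)=\Delta\omega(e^{-\alpha'}),\qquad \alpha':=-\pi i w=\pi i/\tau .
\]
As $\Im\tau\to\infty$ with $|\Re\tau|\le1/2$, we have $\alpha'\to0$ and $\arg\alpha'\to0$, so $\alpha'$ lies in the sector $|\arg\alpha'|\le\theta_0$. Moreover the dual nome is $e^{-\pi^2/\alpha'}=e^{\pi i\tau}=q(\tau)$. The growth bound \eqref{eq:growth-omega}, which is inherited by $\Delta\omega$, therefore gives
\[
|\Delta\omega(-q(\sigma_1\tau))|^6\lesssim|\tau|^{3}\,|q(\tau)|^{-1/2}.
\]
The prefactor $|q(\sigma_1\tau)^{2/3}|^6=e^{-4\pi\Im w}$ tends to $1$. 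For the denominator, $\vartheta_3(1+w)=\vartheta_4(w)$ and $\vartheta_4(-1/\tau)=\sqrt{-i\tau}\,\vartheta_2(\tau)$, with $\vartheta_2(\tau)=2q^{1/4}(1+O(q^2))$. Hence
\[
|\vartheta_3(\sigma_1\tau)|^6\asymp|\tau|^3|q|^{3/2}.
\]
Combining these, the powers of $|\tau|$ cancel, and
\[
|h(\sigma_1\tau)|\lesssim|q|^{-1/2-3/2}=|q|^{-2}=|Q|^{-1},
\]
which is the required bound.

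I expect the main obstacle to be this cusp-$1$ bookkeeping. One must check that the branch of $q^{2/3}$ contributes only a unimodular factor, which the sixth power makes harmless. One must also check that the sector condition in Lemma~\ref{lem: growth cond of omega} covers a full period strip uniformly, so that the bound holds on a punctured neighbourhood of $Q=0$ and not merely along the imaginary axis. I would settle the second point first, by noting that $\arg\alpha'=\arg(\pi i/\tau)\to0$ uniformly on $|\Re\tau|\le1/2$.
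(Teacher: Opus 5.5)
Your proposal is correct and follows essentially the same route as the paper. For (a) you use the direct $q$-expansion at $\infty$. For (b) you use the same $\sigma_1$, the growth bound at $\alpha=\pi i/\tau$ with dual nome $e^{\pi i\tau}$, and the lower bound on $|\vartheta_3(\sigma_1\tau)|$ via $\vartheta_4(-1/\tau)=\sqrt{-i\tau}\,\vartheta_2(\tau)$, which together give $|h(\sigma_1\tau)|\lesssim|Q|^{-1}$ and a removable singularity. The only cosmetic difference is that the paper checks $1$-periodicity of $h(\sigma_1\tau)$ explicitly via $\sigma_1T\sigma_1^{-1}\in\Gamma_\theta$, whereas you cite the width $w_1=1$.
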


\begin{proof}
For \textbf{(a)}, we study $h$ as $\tau \to i \infty$, i.e. as $q\to 0$ (recall the width of the cusp $\infty$ is $2$). Indeed, writing
\begin{equation}
    \vartheta_3(\tau)=1+O(q), \qquad \Delta \omega(q)=:a_m \, q^m +O(q^{m+1}),\qquad a_m \neq 0, \qquad q\to0,
\end{equation}
for some $m\ge0,$ we see that 
\begin{equation}
    h(\tau)=\left(\frac{q^{2/3}((-1)^m a_m \, q^m+O(q^{m+1}))}{1+O(q)}\right)^6=q^{6m+4}\bigl(a_m^6+O(q)\bigr), \qquad q\to0,
\end{equation}
which implies that $\ord_\infty(h)=6m+4\ge 4.$ This proves \textbf{(a)}. For \textbf{(b)}, choose
\begin{equation}
    \sigma_1:=\begin{pmatrix}1&-1\\ 1&0\end{pmatrix}\in SL_2(\Z), \qquad \tau:=\sigma_1(z)=\frac{z-1}{z}=1-\frac1z, 
\end{equation}
so that $\sigma_1(\infty)=1.$  By Definition~\ref{def:cusp}, it is enough to study the Laurent expansion of \(H(z):=h(\sigma_1 z)\) as \(\Im z\to\infty\). We begin by noting that the function $H(z)$ is holomorphic and $1$-periodic on $\mathbb H$. Indeed, letting
\begin{equation}
    \gamma_1:=\sigma_1 T\sigma_1^{-1}
    =
    \begin{pmatrix}0&1\\ -1&2\end{pmatrix}
    =
    -S(T^2)^{-1}\in \Gamma_\theta
\end{equation}
and using the fact that \(h\) is \(\Gamma_\theta\)-invariant, we observe that
\begin{equation}
    H(z+1)=h(\sigma_1(z+1))=h( \sigma_1 T z)= h(\gamma_1 \sigma_1 z) = h(\sigma_1 z)= H(z).
\end{equation}
   
Periodicity of $H$ then implies that there is a unique holomorphic function $F$ on the punctured unit disk
\begin{equation}
     \{u: 0<|u|<1\},
\end{equation}
such that $H(z)= F(e^{2 \pi iz})$ for all $z \in \mathbb H$ (see \cite[Prop.~12.1.3]{ENTStein}). Therefore, to prove \textbf{(b)}, it suffices to show that 
\begin{equation}
    |u\, F(u)|\lesssim 1 \qquad \text{as } u:=e^{2\pi i z}\to 0,
\end{equation}
since the latter implies that $u\, F(u)$ extends holomorphically to $u=0$ by Riemann's removable singularity theorem. Consequently, $h(\tau)= H(z)= F(u)$ is meromorphic at the cusp $1$ and $\ord_1(h)\ge -1$.

To prove this estimate, choose \(Y_0=Y_0(\theta_0)>0\) large enough so that for every
\begin{equation}
    z\in \mathcal S_{Y_0}:=\{x+iy: x\in[0,1],\ y\ge Y_0\},
\end{equation}
the quantity $\alpha_z:=\frac{i\pi}{z}$ satisfies the assumption $|\arg\alpha_z|\le \theta_0$ (possible since $0 \le |\arg \alpha_z| \le \arctan(1/\Im z)$). Now since $\tau=1-1/z,$ we have  
\begin{equation}
    -q(\tau)=-e^{i\pi(1-1/z)}=e^{-i\pi/z}=e^{-\alpha_z}.
\end{equation}
Accordingly, we write
\begin{equation}
       q_{1,z}:=\exp \Bigl(-\frac{\pi^2}{\alpha_z}\Bigr)=e^{i\pi z}, \qquad u:= e^{2\pi i z}= q_{1,z}^2.
\end{equation}
Applying the growth condition to $\Delta \omega,$ we get that for $z \in \mathcal S_{Y_0}$ and \(Y_0\) sufficiently large,
\begin{equation}\label{eq:Ubound 1}
|U(\tau)|
=|q(\tau)^{2/3}\Delta\omega(-q(\tau))| \le |\Delta\omega(e^{- \alpha_z})|
\lesssim |\alpha_z|^{-1/2}\,|q_{1,z}|^{-1/12}.
\end{equation}
On the other hand, using standard Jacobi theta transformation laws \cite[\href{https://dlmf.nist.gov/20.7.E31}{(20.7.31)}, \href{https://dlmf.nist.gov/20.7.E33}{(20.7.33)}]{DLMF} and the Jacobi triple product identity for $\vartheta_2$ \cite[\href{https://dlmf.nist.gov/20.4.E3}{(20.4.3)}]{DLMF}, we have 
\begin{equation}
    \vartheta_3(\tau)=\vartheta_3 \left(1-\frac1z\right)
=\vartheta_4 \left(-\frac1z\right)
=\sqrt{-iz}\,\vartheta_2(z)= 2 \sqrt{-iz}\,  q_{1,z}^{\,1/4}\prod_{n\ge 1}(1- q_{1,z}^{\,2n})(1+q_{1,z}^{\,2n})^2.
\end{equation}
Thus, since the last product is bounded away from $0$ uniformly on $\mathcal S_{Y_0}$ (with large enough $Y_0$), it follows that there is a constant $c>0$ such that 
\begin{equation}\label{eq:theta3 lower}
|\vartheta_3(\tau)|\ge c\,|z|^{1/2}\,|q_{1,z}|^{1/4}
= c\,|z|^{1/2}\,|u|^{1/8}
\end{equation}
holds uniformly on $\mathcal S_{Y_0}$. Hence using \eqref{eq:Ubound 1}, \eqref{eq:theta3 lower}, and $\alpha_z=\frac{i\pi}{z}$, we obtain  
\begin{equation}
    |A(\tau)|=\frac{|U(\tau)|}{|\vartheta_3(\tau)|}
\lesssim \frac{|\alpha_z|^{-1/2}|q_{1,z}|^{-1/12}}{|z|^{1/2}|q_{1,z}|^{1/4}}
\asymp |q_{1,z}|^{-1/3}=|u|^{-1/6}, \qquad u \to 0,
\end{equation}
so that 
\begin{equation}
    |F(u)|= |H(z)|=|h(\sigma_1 z)|=|h(\tau)|=|A(\tau)|^6\lesssim |u|^{-1}, \qquad u \to 0.
\end{equation}
This completes the proof of the lemma.
\end{proof}

\section{Action of $SL_2(\mathbb{Z})$ on $v$ in \eqref{eq:v23 def}}\label{S:SL2Z}
 This is a straightforward calculation. Since $S$ and $T$ generate SL$_2(\mathbb{Z})$,  Equations \eqref{eq:S v} and \eqref{eq:T v} reduce the action of SL$_2(\mathbb{Z})$ on $v$ to one in the group $G$ generated by $-M$ and $D$, defined in \eqref{eq:m} and \eqref{eq:T v}, respectively. A calculation shows that $G$ is the group presented by these generators, $D$ and $-M$ and the relations $D^{20}=M^{2}=(-MD)^{3}=\mathbb{1}$.

  A similar calculation applies to $\chi_0$ and $\chi_1$, after adding the nonholomorphic part, and using their transformations (see \cite[pp.110]{GM12}).

\section{Transformation identities not satisfying the conditions}
\label{sec:appendix-C}

For the order 5 mock theta functions in section \ref{sec:mf$_5$}, it is possible to combine known identities listed in \cite{GM12} to satisfy the mf$_5$ transformation identities \eqref{eq:mf5-identity} for $\alpha>0$:
    \begin{eqnarray}
       \mathcal K_0(Q)&=&-3\,\psi_0(-\sqrt{Q}) \qquad; \qquad \mathcal K_1(Q)=3\,\sqrt{Q}\, \psi_1(-\sqrt{Q})
       \label{eq:mf$_5$-other-solution1}\\
   \mathcal K_0(Q)&=&\frac{3}{2}\,f_0(-\sqrt{Q}) \qquad; \qquad \mathcal K_1(Q)=-\frac{3}{2}\,\sqrt{Q}\, f_1(-\sqrt{Q})
   \label{eq:mf$_5$-other-solution2}\\
   \mathcal K_0(Q)&=&\frac{3}{2}\left(\phi_0(-Q)-\psi_0(\sqrt{Q})\right) \quad; \quad \mathcal K_1(Q)=-\frac{3}{2}\left(\phi_1(-Q)+\sqrt{Q}\,\psi_1(\sqrt{Q})\right)
    \label{eq:mf$_5$-other-solution4}
    \end{eqnarray}
Here, $\psi_0$, $\psi_1$, $f_0$, $f_1$, 
$\phi_0$, $\phi_1$ are standard order 5 mock theta functions (\cite{GM12}, p 100).
However, none of the solutions in \eqref{eq:mf$_5$-other-solution1},\eqref{eq:mf$_5$-other-solution4} matches the unary $Q$-series structure on the Stokes line $\alpha <0$, and furthermore the solutions in \eqref{eq:mf$_5$-other-solution1},\eqref{eq:mf$_5$-other-solution4} are not holomorphic in $Q$.

Similarly, for the order 3 mock theta functions in section \ref{sec:mf$_3$},  a different solution to the transformation law \eqref{eq:omega tlaw} can be deduced by combining identities in \cite{GM12}, pages 114-115:
    \begin{eqnarray}
        \sqrt{\frac{12\alpha}{\pi}}\, W_3(\alpha) &=& q^{2/3}\, \left[ -\rho(-q) +\frac{1}{2}q^{-2/3}\, \xi(-q^{1/3})\right] \nonumber\\
        &&
        +\sqrt{\frac{\pi}{\alpha}}\, 
        q_1^{2/3}\, \left[ -\rho(-q_1) +\frac{1}{2}q_1^{-2/3}\, \xi(-q_1^{1/3})\right]
        \label{eq:mf$_3$-non}
    \end{eqnarray}
   Here the mf$_3$ $\rho$ and $\xi$ are defined as (\cite{GM12}, pp 100-101):\footnote{There is a small typo in \cite{GM12} for the definition of $\xi(q)$.}
    \begin{eqnarray}
       \rho(q)&:=& \sum_{n=0}^\infty \frac{q^{2n(n+1)}\, (q; q^2)_{n+1}}{(q^3; q^6)_{n+1}}
       \\
       \xi(q)&:=& 1+2\sum_{n=1}^\infty \frac{q^{6n(n-1)+1}}{(q; q^6)_{n}\,(q^5; q^6)_{n}}
       \label{eq:rho}     
    \end{eqnarray}
    However, the combination $(-\rho(-q) +\frac{1}{2}q^{-2/3}\, \xi(-q^{1/3}))$ in \eqref{eq:mf$_3$-non} does not match the structure on the Stokes line, $\alpha<0$, and furthermore it is not holomorphic in $q$. Indeed, from the identities listed in \cite[p. 108]{GM12}, we see that $(-\rho(-q) +\frac{1}{2}q^{-2/3}\, \xi(-q^{1/3}))$ differs from $\omega(-q)$ by the addition of a purely modular combination of $q$-Pochhammer symbols, which break the unary expansions at large $q$, and which are not all holomorphic in $q$.

\end{document}